\theoremstyle{plain}
\newtheorem{thm}{Theorem}[section]
\newtheorem{prop}{Proposition}[section]
\newtheorem{cor}{Corollary}[section]
\theoremstyle{definition}
\newtheorem{df}{Definition}[section]
\newtheorem{rem}{Remark}[section]
\newcommand{\RR}{\mathbb{R}}
\begin{document}

\title{Optimal antipodal spherical codes in the space of spherical harmonics
}
\author{
Tsuyoshi Miezaki\thanks{Faculty of Education, University of the Ryukyus, Okinawa  
903-0213, Japan 
miezaki@edu.u-ryukyu.ac.jp
(Corresponding author)
}
}

\date{}
\maketitle

\begin{abstract}
In a previous study, 
we presented a construction of spherical $3$-designs. 
In the current study, using this construction, 
we present new optimal antipodal spherical codes 
in the space of spherical harmonics. 
Our construction is a generalization of 
Bondarenko's work.
\end{abstract}

{\small
\noindent
{\bfseries Keywords:}
Spherical designs, Lattices, Spherical harmonics.\\ \vspace{-0.15in}

\noindent
2010 {\it Mathematics Subject Classification}. 
Primary 05B30;
Secondary 11H06.\\ \quad


\section{Introduction}
This study is a sequel to the previous study \cite{M}, 
which is inspired by \cite{B}. 
\cite{B} gives an optimal antipodal spherical $(35,240,1/7)$ code
whose vectors form a spherical $3$-design. 
Generalizing the study \cite{B}, 
\cite{M} gives a construction of spherical $3$-designs. 

In the current study, using this construction, 
we present optimal antipodal spherical codes 
in the space of spherical harmonics. 
To explain our results, 
we review \cite{B} and the concept of 
spherical codes.

First, we quote some results from \cite{B}. 
Let
\[
\Delta=\sum_{j=1}^{d+1}\frac{\partial^2}{\partial x_j^2}. 
\]
We say that a polynomial $P$ in $\RR^{d+1}$ is harmonic if 
$\Delta P=0$. 
For integer $k\geq 1$, 
the restriction of a homogeneous harmonic polynomial of degree 
$k$ to $S^{d}$ is called a spherical harmonic of degree $k$. 
We denote by $\mbox{Harm}_k(S^d)$ the vector space of 
the spherical harmonics of degree $k$. 
Note that, (see for example \cite{V})
\[
\dim\mbox{Harm}_k(S^d)=\frac{2k+d-1}{k+d-1}\binom{d+k-1}{k}.
\]

For $P,Q\in \mbox{Harm}_k(S^d)$, 
we denote by $\langle P,Q\rangle$ the usual inner product
\[
\langle P,Q\rangle:=\int_{S^d}P(x)Q(x)d\sigma(x), 
\]
where $d\sigma(x)$ is a normalized Lebesgue measure on the 
unit sphere $S^d$. 
For $x\in S^d$, 
there exists $P_x\in \mbox{Harm}_k(S^d)$ such that 
\[
\langle P_x,Q\rangle=Q(x)\ \mbox{for all }Q\in \mbox{Harm}_k(S^d). 
\]
It is known that 
\[
P_x(y)=g_{k,d}((x,y)), 
\]
where $g_{k,d}$ is a Gegenbauer polynomial. 
Let 
\[
G_x=\frac{P_x}{g_{k,d}(1)^{1/2}}. 
\]
It should be noted that 
\[
\langle G_x,G_y\rangle=\frac{g_{k,d}((x,y))}{g_{k,d}(1)}. 
\]
(For a detailed explanation of 
Gegenbauer polynomials, see \cite{V}.) 
Therefore, 
if we have a set 
$X=\{x_1,\ldots,x_N\}$ in $S^d$, 
then we obtain the set 
$G_X=\{G_{x_1},\ldots,G_{x_N}\}$ in $S^{\dim{\rm Harm}_k(S^{d})-1}$. 

Thereafter, we recall the concept of an optimal antipodal 
$(d +1,N ,a )$ code. 
\begin{df}[\cite{CS}]
An antipodal set 
$X = \{ x_1,\ldots,x_N\}$ in $S^d$ (i.e.~$X=-X$)
is called an antipodal spherical 
$( d +1,N ,a )$ code 
if $|(x_i,x_j )|\leq a$ for some $a > 0$ 
and all $x_i,x_i\in X$, $i\neq j$, are not antipodal. 

This code
is called optimal if, 
for any antipodal set 
$Y=\{y_1,\ldots,y_N\}$
on $S^d$, 
there exists $y_i,y_j\in Y, 
i\neq j$, that
are not antipodal and are such that
$|(y_i,y_j)|\geq a$. 
\end{df}

Let $X=\{x_1,\ldots,x_{120}\}$ be an arbitrary subset of $240$
normalized minimum vectors of the $E_8$ lattice such that 
no pair of antipodal vectors is present in $X$. 
Set $P_x(y)=g_{2,7}((x,y))$. 
\cite{B} showed that $G_X \cup -G_{X}$ is an optimal antipodal spherical $(35,240,1/7)$ code whose vectors form a spherical $3$-design, 
where
\[
-G_X:=\{-G_{x}\mid G_x\in G_X\}. 
\]
However, this fact is an example that extends to a more general setting as follows: 
The spherical $3$-design obtained by Bondarenko in \cite{B} 
is a special case of 
our main result, which is presented as the following theorem: 

\begin{thm}\label{thm:main}
Let $X$ be an $(d+1,N,a)$ code 
and 
\[
IP:=\{(x_i,x_j)\in X^2\mid x_j\neq -x_i, i\neq j \}
\]
We assume that for all $a\in IP$, 
\[
g_{2,d}(a)=\ell\ (\mbox{constant}) 
\]
and
$X$
satisfies the 
condition 
\begin{align}\label{eqn:1}
\frac{(2N)^2}{d(d+3)/2}-4N=\ell^2((2N)^2-4N). 
\end{align}
Then, $G_X\cup -G_X$ is an 
optimal antipodal  $(d(d+3)/2,2N,\ell)$ code and 
spherical $3$-design 
in $S^{\dim{\rm Harm}_2({S^d})-1}$. 
\end{thm}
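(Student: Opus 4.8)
\noindent The plan is to derive both assertions---the spherical $3$-design property and optimality---from a single second-moment identity, which is exactly what \eqref{eqn:1} encodes. Write $n:=\dim\Harm_2(S^d)=d(d+3)/2$ (this value of the dimension follows by setting $k=2$ in the displayed formula), and put $C:=G_X\cup -G_X\subset S^{n-1}$. Each $G_x$ is a unit vector; since $g_{2,d}$ is an even polynomial one has $G_{-x}=G_x$, and because $X$ contains no antipodal pair the $N$ vectors $G_{x_1},\dots,G_{x_N}$ are distinct, so $C$ consists of exactly $2N$ points and $C=-C$. Using $\la G_{x_i},G_{x_j}\ra=g_{2,d}((x_i,x_j))/g_{2,d}(1)$ and the hypothesis that $g_{2,d}$ is constant on $IP$, the normalized inner product is constant in absolute value over all non-antipodal distinct pairs; writing $\ell$ for this common value of $|\la G_{x_i},G_{x_j}\ra|$, every ordered pair $u,v\in C$ with $u\neq\pm v$ satisfies $\la u,v\ra^2=\ell^2$ (the signs coming from the factors $\pm1$ disappear after squaring). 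In particular $C$ is an antipodal $(n,2N,\ell)$ code, and it remains to prove the design property and optimality.

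For the design property I would invoke the Delsarte--Goethals--Seidel criterion: a finite set $C\subset S^{n-1}$ is a spherical $3$-design iff $\sum_{u,v\in C}g_{k,n-1}(\la u,v\ra)=0$ for $k=1,2,3$, where $g_{k,n-1}$ is the degree-$k$ Gegenbauer polynomial for $S^{n-1}$. The cases $k=1,3$ hold automatically: these polynomials are odd, so replacing $v$ by $-v$ (allowed since $C=-C$) shows each sum equals its own negative. For $k=2$ one has $g_{2,n-1}(t)\propto nt^{2}-1$, so the criterion becomes $n\sum_{u,v\in C}\la u,v\ra^{2}=(2N)^{2}$. Splitting the sum into the $2N$ diagonal pairs ($u=v$), the $2N$ antipodal pairs ($v=-u$), and the remaining $(2N)^{2}-4N$ pairs gives $\sum_{u,v\in C}\la u,v\ra^{2}=4N+((2N)^{2}-4N)\ell^{2}$, whence the criterion is precisely $(2N)^{2}/n-4N=\ell^{2}((2N)^{2}-4N)$, i.e.\ \eqref{eqn:1}. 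Thus $C$ is a spherical $3$-design.

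For optimality, let $Y=\{y_1,\dots,y_{2N}\}\subset S^{n-1}$ be an arbitrary antipodal set and set $\alpha:=\max\{|\la y_i,y_j\ra|:y_i\neq\pm y_j\}$. The load-bearing estimate is the tight-frame bound $\sum_{i,j}\la y_i,y_j\ra^{2}=\operatorname{tr}(M^{2})\ge(\operatorname{tr}M)^{2}/n=(2N)^{2}/n$, where $M:=\sum_i y_iy_i^{\top}$ is a positive semidefinite operator on $\RR^{n}$ of trace $2N$. Splitting the left-hand side exactly as above yields $(2N)^{2}/n\le 4N+((2N)^{2}-4N)\alpha^{2}$, and comparing with \eqref{eqn:1} forces $\alpha^{2}\ge\ell^{2}$, i.e.\ $\alpha\ge\ell$. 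Hence every antipodal $2N$-point subset of $S^{n-1}$ contains two non-antipodal points whose inner product has absolute value at least $\ell$, so $C$, which attains the value $\ell$, is optimal.

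The arithmetic of the three classes of pairs and the Gegenbauer normalizations are routine; the one genuinely load-bearing observation---and the step I expect to require the most care---is recognizing that \eqref{eqn:1} is simultaneously the degree-$2$ design equation and the equality case of the tight-frame bound, so that a single second-moment computation delivers both halves of the theorem. I would also verify the side conditions needed for the counts to be exact, namely $\ell<1$ (so that $C$ really has $2N$ distinct, pairwise non-antipodal points) and the precise normalization of the degree-$2$ Gegenbauer polynomial entering the Delsarte--Goethals--Seidel criterion.
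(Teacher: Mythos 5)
Your proof is correct and follows essentially the same route as the paper: both assertions are reduced to the single second-moment identity obtained by splitting $\sum_{u,v}\langle u,v\rangle^2$ into diagonal, antipodal, and remaining pairs ($4N+((2N)^2-4N)\ell^2$) and comparing with $(2N)^2/\dim\Harm_2(S^d)$, which is exactly equation \eqref{eqn:1}. The only difference is that you re-derive the antipodal $3$-design criterion and the lower bound $\frac{1}{|Y|^2}\sum(y_i,y_j)^2\geq 1/n$ from the Delsarte--Goethals--Seidel conditions and the tight-frame inequality, whereas the paper cites both as Proposition 2.1 (Venkov); your closing remark that $|\ell|<1$ is needed for $G_X\cup -G_X$ to consist of $2N$ pairwise non-antipodal points is a side condition the paper leaves implicit.
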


In the following, 
we give some examples satisfying the 
condition of Theorem \ref{thm:main}. 
We quote a table of the known sharp configurations which form 
spherical $t$-design with $t\geq 4$, 
together with the 600-cell. 
(For a detailed explanation of 
sharp configurations, see \cite{CK}.) 
\begin{table}[H]
\caption{Table of the known sharp configurations which form 
are spherical $t$-design with $t\geq 4$, 
together with the 600-cell. (from \cite{CK}).}
\begin{center}
\begin{tabular}{ccccc}
$n$ &$N$ &$M$ &Inner products &Name  \\\hline
3& 12& 5& $-1,\pm 1/\sqrt{5}$ &icosahedron\\
4& 120& 11& $-1,\pm 1/2, 0, (\pm 1 \pm \sqrt{5})/4$ &600-cell\\
8& 240 &7& $-1,\pm 1/2, 0$& $E_8$ roots\\
7& 56& 5 &$-1,\pm 1/3$ &kissing\\
6& 27& 4& $-1/2, 1/4$ &kissing/Schl\"afli\\
24& 196560& 11& $-1,\pm 1/2,\pm 1/4, 0$ &Leech lattice\\
23& 4600& 7&$ -1,\pm 1/3, 0 $&kissing\\
22& 891& 5 &$-1/2,-1/8, 1/4$& kissing\\
23& 552& 5 &$-1,\pm 1/5$ &equiangular lines\\
22& 275& 4 &$-1/4, 1/6$ &kissing
\end{tabular}
\end{center}
\end{table}
Some of these examples satisfy the condition of Theorem \ref{thm:main}: 
\begin{cor}\label{cor:main1}
There exists antipodal spherical codes whose vectors form a
spherical $3$-design with the following parameters: 
\begin{center}
\begin{tabular}{ccc}
$(d+1,N,a)$\ {\rm code}&$|(x_i,x_j)|$&$|\langle G_{x_i},G_{x_j}\rangle|$ \\\hline
$(5,12,1/5)$&$\{1/\sqrt{5}\}$&$\{1/5\}$\\
$(9,120,(1+\sqrt{5})/6)$&$\{0,(\pm 1+\sqrt{5})/4,1/2\}$&$\{0,(\pm1+\sqrt{5})/6,1/3\}$\\
$(35,240,1/7)$\ {\rm \cite{B}}&$\{0,1/2\}$&$\{1/7\}$\\
$(27,56,1/27)$\ {\rm \cite{M}}&$\{1/3\}$&$\{1/27\}$\\
$(20,54,1/8)$\ {\rm \cite{M}}&$\{1/4,1/2\}$&$\{1/10,1/8\}$\\
$(299,196560,5/23)$\ {\rm \cite{M}}&$\{0,1/4,1/2\}$&$\{1/46,1/23,5/23\}$\\
$(275,4600,7/99)$&$\{0,1/3\}$&$\{1/22,7/99\}$\\
$(252,1782,3/14)$&$\{1/8,1/4,1/2\}$&$\{1/56,1/32,3/14\}$\\
$(275,552,1/275)$&$\{1/5\}$&$\{1/275\}$\\
$(252,275,1/54)$&$\{1/6,1/4\}$&$\{1/56,1/54\}$
\end{tabular}
\end{center}
Moreover, $(5,12,1/5)$, $(27,56,1/27)$, $(35,240,1/7)$, and $(275,552,1/275)$ 
codes are optimal antipodal spherical codes 
whose vectors form a spherical $3$-design.
\end{cor}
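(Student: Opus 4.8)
The plan is to realize every row of the table as the image $G_X\cup -G_X$ of one of the listed source configurations $X$ (a sharp configuration, or the $600$-cell), and then to invoke Theorem~\ref{thm:main} for exactly the four rows whose hypotheses are met. First I would record, for $k=2$, the explicit normalized value
\[
\langle G_x,G_y\rangle=\frac{g_{2,d}((x,y))}{g_{2,d}(1)}=\frac{(d+1)(x,y)^2-1}{d},
\]
together with
\[
\dim\Harm_2(S^d)=\frac{2\cdot2+d-1}{2+d-1}\binom{d+1}{2}=\frac{d(d+3)}{2},
\]
so the image lies on $S^{d(d+3)/2-1}$. This fixes the first coordinate $d(d+3)/2$ of each code, and, applied to the middle column of inner products, produces the right column; the largest resulting absolute value is the code parameter $a$, and since every nonantipodal pair of $G_X\cup -G_X$ has absolute inner product at most this value, the set is a valid antipodal $(d(d+3)/2,\,\cdot\,,a)$ code.

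Next I would dispose of the cardinality and $3$-design claims uniformly. Because $g_{2,d}$ is even we have $G_{-x}=G_x$, so a centrally symmetric source of $2s$ points and a source of $s$ points with no antipodal pair both yield $|G_X\cup -G_X|=2s$, matching the second coordinate in each row. For the design property, every source is (by the table's caption) a spherical $t$-design with $t\ge4$; hence $X$ integrates the degree-$4$ integrand $g_{2,d}((x,y))\,g_{2,d}((x,z))$ exactly, which forces the second moment $\sum_{x}G_x\otimes G_x$ to be a scalar multiple of the identity on $\Harm_2(S^d)$. Central symmetry of $G_X\cup -G_X$ kills its first and third moments, so the second-moment condition upgrades it to a spherical $3$-design; this is the construction of \cite{M}.

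For the four optimality assertions I would check the two hypotheses of Theorem~\ref{thm:main} one configuration at a time. The constancy $g_{2,d}(a)=\ell$ holds because the quadratic above returns a single absolute value on the nonantipodal inner products of each of the icosahedron, the kissing configuration in $S^6$, the $E_8$ roots, and the equiangular configuration in $S^{22}$, namely $1/5,\,1/27,\,1/7,\,1/275$ respectively (for $E_8$ the inputs $0$ and $\pm1/2$ give $-1/7$ and $+1/7$, so $|\ell|=1/7$ is still constant). It then remains to verify the identity \eqref{eqn:1},
\[
\frac{(2N)^2}{d(d+3)/2}-4N=\ell^2\bigl((2N)^2-4N\bigr),
\]
for the four triples $(2N,d,\ell)=(12,2,1/5),\,(56,6,1/27),\,(240,7,1/7),\,(552,22,1/275)$, each of which collapses to one numerical equality. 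With both hypotheses verified, Theorem~\ref{thm:main} yields optimality.

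The main obstacle is bookkeeping rather than ideas: one must pair each row with the correct source and confirm its design strength, compute the right column through $\frac{(d+1)t^2-1}{d}$ with no sign or normalization slip, and establish the delicate equality \eqref{eqn:1} exactly in the four special cases, since it is precisely this equality that makes the linear-programming bound behind Theorem~\ref{thm:main} tight. For the remaining six rows the Gegenbauer image carries more than one absolute inner-product value, so Theorem~\ref{thm:main} does not apply and optimality is (correctly) not claimed.
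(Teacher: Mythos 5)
Your proposal is correct and follows essentially the same route as the paper: the design property comes from the construction of \cite{M} (Corollary~\ref{cor:3-designs}) applied to the sharp configurations and the $600$-cell, and optimality for the four distinguished rows is obtained by verifying the hypotheses of Theorem~\ref{thm:main} case by case (the paper only writes out the icosahedron and declares the rest similar, whereas you check all four numerically). Your remark that for the $E_8$ roots $g_{2,7}$ takes both values $\pm 1/7$, so only $|\ell|$ is constant, is a useful observation about a small imprecision in the hypothesis of Theorem~\ref{thm:main} that the paper glosses over.
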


The following corollary gives parameters satisfying the 
condition of Theorem \ref{thm:main}:
\begin{cor}\label{cor:main2}
Let $X$ be a $(d+1,N,a)$ code satisyfing the condition of 
Theorem \ref{thm:main}. 
Assume that there exists $3\leq d+1\leq 100$ and 
$1/\sqrt{m}\in IP$ for some $1\leq m\leq 200$ 
such that $g_{2,d}(1/\sqrt{m})=\ell\neq 0$. 
Then, in {\rm \cite{Mtable}}, 
we list the possible parameters 
$d$, $N$, and $\ell$. 
If a $(d+1,N,a)$ code 
with the parameter in {\rm \cite{Mtable}} exists, 
then there exists an 
optimal antipodal  $(d(d+3)/2,2N,\ell)$ code and 
spherical $3$-design 
in $S^{\dim{\rm Harm}_2({S^d})-1}$. 

We note that $IP$ must be a subset of ``Inner Product" in {\rm \cite{Mtable}}. 
\end{cor}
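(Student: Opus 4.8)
The plan is to obtain Corollary \ref{cor:main2} as a finite, computational specialization of Theorem \ref{thm:main}: all of the geometric content (the $3$-design property and the optimality) is already supplied by the theorem, so the task is only to identify, by an explicit evaluation of $g_{2,d}$ and a search over the stated ranges, those triples $(d,N,\ell)$ for which the design equation \eqref{eqn:1} can hold. First I would record the degree-$2$ kernel in closed form. In the normalization used in Theorem \ref{thm:main}, where $\ell=g_{2,d}(a)=\langle G_{x_i},G_{x_j}\rangle$ and $\dim\Harm_2(S^d)=d(d+3)/2$, one has
\[
g_{2,d}(t)=\frac{(d+1)t^2-1}{d},\qquad g_{2,d}(1)=1,
\]
an even function of $t$ that is strictly increasing in $t^2$. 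Since \eqref{eqn:1} involves $\ell$ only through $\ell^2$, the operative requirement of Theorem \ref{thm:main} is that $g_{2,d}(a)^2$ be constant on $IP$; this is what admits the two $E_8$ inner products $0$ and $1/2$ simultaneously, since $g_{2,7}(0)=-1/7$ while $g_{2,7}(1/2)=+1/7$.

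Given a distinguished inner product $1/\sqrt m\in IP$ I would set $\ell=g_{2,d}(1/\sqrt m)=(d+1-m)/(dm)$, so that $|\ell|$ is the common value of $|g_{2,d}|$ on $IP$. Solving $g_{2,d}(t)=\pm\ell$ then shows that the only admissible inner products are those $t$ with
\[
t^2=\frac{1+d\ell}{d+1}\quad\text{or}\quad t^2=\frac{1-d\ell}{d+1},
\]
i.e.\ at most two values of $t^2$ in $[0,1]$; this finite level set is exactly the ``Inner Product'' column of \cite{Mtable}, and it is the content of the closing remark that any realizing $IP$ must be one of its subsets. Next I would solve \eqref{eqn:1} for $N$. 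Writing $M=2N$ and $D=d(d+3)/2$, equation \eqref{eqn:1} reads $M^2/D-2M=\ell^2(M^2-2M)$; cancelling the common factor $M\neq 0$ makes it \emph{linear} in $M$, with the unique solution
\[
N=\frac{(1-\ell^2)D}{1-D\ell^2}=\frac{(d+3)\bigl[d^2m^2-(d+1-m)^2\bigr]}{2dm^2-(d+3)(d+1-m)^2}.
\]
Because $N$ is determined uniquely by $(d,m)$, each admissible pair yields at most one candidate size and the search involves no branching.

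The remaining step is the enumeration tabulated in \cite{Mtable}. I would let $d$ range over $2\le d\le 99$ and $m$ over $1\le m\le 200$, discard the degenerate cases $m=d+1$ (which force $\ell=0$) and those with $1-D\ell^2=0$, and retain precisely the pairs for which the displayed $N$ is a positive integer; the surviving $(d,N,\ell)$ are the listed parameters. For any such parameter set that is in fact realized by a genuine $(d+1,N,a)$ code on $S^d$ whose non-antipodal inner products lie in the level set above, the hypotheses of Theorem \ref{thm:main} hold verbatim, and the theorem delivers the optimal antipodal $(d(d+3)/2,2N,\ell)$ code and spherical $3$-design in $S^{\dim\Harm_2(S^d)-1}$.

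I expect the main obstacle to be conceptual rather than computational: the corollary is \emph{conditional}, concluding only ``if a $(d+1,N,a)$ code with the parameter exists,'' so it imports existence from known configurations (sharp configurations, lattice minimal vectors) rather than constructing the codes itself. The genuine verification therefore splits into (i) correctness of the closed form for $g_{2,d}$ and the induced value of $\ell$, (ii) the linear reduction of \eqref{eqn:1} to the displayed rational expression for $N$, and (iii) exhaustiveness and correct integrality testing of the finite search over $2\le d\le 99$, $1\le m\le 200$. Care is needed only in excluding the vanishing denominators and in confirming that the enumeration misses no admissible $(d,m)$; every design and optimality assertion is then inherited directly from Theorem \ref{thm:main}.
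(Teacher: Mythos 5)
Your proposal is correct and follows essentially the same route as the paper: reduce the corollary to a finite Mathematica-style search over $2\le d\le 99$, $1\le m\le 200$, using the closed form $g_{2,d}(t)=\frac{(d+1)t^2-1}{d}$ and the fact that equation \eqref{eqn:1} becomes linear in $2N$ after cancelling, with all design and optimality content inherited from Theorem \ref{thm:main}; your explicit formula for $N$ and the level-set description of the admissible inner products are just more detailed versions of what the paper delegates to the computation. The only ingredient you omit is the paper's additional filtering of the candidate list by the Fisher bounds of \cite{DGS}, a minor pruning step that does not affect the logic.
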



In section \ref{sec:pre}, 
we give a definition of spherical $t$-designs and 
a construction of shperical $3$-designs. 
In section \ref{sec:proof}, we give 
proofs of Theorem \ref{thm:main}, 
Corollary \ref{cor:main1}, 
and 
Corollary \ref{cor:main2}, 
along with a concluding remark. 

All computer calculations in this study were done with the help of 
Mathematica \cite{Mathematica}.

\section{Preliminary}\label{sec:pre}
In this section, we explain the concept of spherical $t$-designs 
and give the construction of spherical $3$-designs.
\begin{df}[\cite{DGS}]
For a positive integer $t$, a finite non-empty set $X$ in the unit sphere
\[
S^{d} = \{{ x} = (x_1, \ldots , x_{d+1}) \in \RR ^{d+1}\mid 
x_1^{2}+ \cdots + x_{d+1}^{2} = 1\}
\]
is called a spherical $t$-design in $S^{d}$ if the following condition is satisfied:
\[
\frac{1}{|X|}\sum_{{ x}\in X}f({ x})=\frac{1}{|S^{d}|}
\int_{S^{d}}f({ x})d\sigma ({ x}), 
\]
for all polynomials $f({ x}) = f(x_1, \ldots ,x_{d+1})$ 
of degree not exceeding $t$. Here, the right hand side involves the surface integral over the sphere 
and $|S^{d}|$, the volume of sphere $S^{d}$. 

\end{df}
The meaning of spherical $t$-designs is that the average value of the integral of any polynomial of degree up to $t$ on the sphere can be replaced by its average value over a finite set on the sphere. 

The following is an equivalent condition of the 
antipodal spherical $3$-designs: 
\begin{prop}[\cite{V}]\label{prop:V}
An antipodal set $X = \{x_1,\ldots, x_N\}$ in $S^d$ 
forms a spherical $3$-design if and only if
\[
\frac{1}{|X|^2}\sum_{x_i,x_j\in X}(x_i,x_j)^2=\frac{1}{d+1}. 
\]
We note that, for any $Y:=\{y_1,\ldots,y_N\}\in S^d$, 
the following inequality holds:
\[
\frac{1}{|Y|^2}\sum_{y_i,y_j\in Y}(y_i,y_j)^2\geq \frac{1}{d+1}. 
\]


\end{prop}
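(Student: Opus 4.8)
The plan is to translate both the design property and the quantity $\frac{1}{|X|^2}\sum_{i,j}(x_i,x_j)^2$ into statements about the second-moment matrix
\[
M:=\frac{1}{|X|}\sum_{x\in X}xx^{\top}\in\RR^{(d+1)\times(d+1)},
\]
and then to settle everything with a single Cauchy--Schwarz argument on its eigenvalues. First I would record the elementary properties of $M$: it is symmetric and positive semidefinite, and since every $x\in X$ lies on $S^d$ we have $\mathrm{tr}(M)=\frac{1}{|X|}\sum_x\|x\|^2=1$. Expanding the inner products coordinatewise gives
\[
\frac{1}{|X|^2}\sum_{x,y\in X}(x,y)^2
=\frac{1}{|X|^2}\sum_{k,l}\Big(\sum_{x}x_kx_l\Big)\Big(\sum_{y}y_ky_l\Big)
=\sum_{k,l}M_{kl}^2=\mathrm{tr}(M^2).
\]

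Writing $\lambda_1,\ldots,\lambda_{d+1}\ge 0$ for the eigenvalues of $M$, the last display equals $\sum_i\lambda_i^2$, while $\sum_i\lambda_i=\mathrm{tr}(M)=1$. By Cauchy--Schwarz,
\[
\frac{1}{d+1}=\frac{1}{d+1}\Big(\sum_i\lambda_i\Big)^2\le \sum_i\lambda_i^2,
\]
with equality if and only if all the $\lambda_i$ coincide, i.e.\ $M=\frac{1}{d+1}I$. Since the identities $\mathrm{tr}(M^2)=\frac{1}{|X|^2}\sum(x,y)^2$ and $\mathrm{tr}(M)=1$ used only that the points lie on $S^d$, this already proves the stated inequality for an arbitrary set $Y$ and identifies its equality case as $M_Y=\frac{1}{d+1}I$.

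It remains to match this equality case with the spherical $3$-design property of an antipodal $X$. I would first note the moment identity $\frac{1}{|S^d|}\int_{S^d}x_kx_l\,d\sigma=\frac{\delta_{kl}}{d+1}$: by permutation symmetry $\int x_k^2\,d\sigma$ is independent of $k$, and summing over $k$ with $\sum_k x_k^2=1$ forces the value $\frac{1}{d+1}$, while for $k\ne l$ the integrand is odd in $x_k$ and integrates to $0$. Hence $M=\frac{1}{d+1}I$ is exactly the assertion that the design equation $\frac{1}{|X|}\sum_x f(x)=\frac{1}{|S^d|}\int_{S^d}f\,d\sigma$ holds for every quadratic monomial $f=x_kx_l$, and therefore, by linearity, for every homogeneous polynomial of degree $2$.

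Finally I would reduce the full degree-$\le 3$ design condition to this degree-$2$ condition using antipodality. Degree-$0$ terms hold trivially. For a monomial $f$ of odd degree ($1$ or $3$) we have $f(-x)=-f(x)$, so $\sum_{x\in X}f(x)=0$ because $X=-X$, and $\int_{S^d}f\,d\sigma=0$ by the same parity; thus the degree-$1$ and degree-$3$ conditions are automatic. Consequently $X$ is a spherical $3$-design if and only if the degree-$2$ condition holds, i.e.\ $M=\frac{1}{d+1}I$, which by the eigenvalue argument is equivalent to $\mathrm{tr}(M^2)=\frac{1}{d+1}$, that is, $\frac{1}{|X|^2}\sum_{i,j}(x_i,x_j)^2=\frac{1}{d+1}$. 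The only step demanding care is the passage from ``$M=\frac{1}{d+1}I$'' to ``design condition for all degree-$2$ polynomials,'' which rests on the fact that the quadratic monomials $x_kx_l$ span the homogeneous degree-$2$ polynomials; everything else is parity and bookkeeping.
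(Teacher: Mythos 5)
Your proposal is correct, but note that the paper itself offers no proof of this proposition at all: it is quoted from Venkov's lecture notes \cite{V}, where the standard route is through harmonic analysis --- expanding $\sum_{i,j} f((x_i,x_j))$ in Gegenbauer polynomials, using the positivity $\sum_{i,j} g_k((x_i,x_j)) \ge 0$ for each degree $k$, and observing that for an antipodal set the odd-degree conditions are automatic, so that the $3$-design property reduces to the single degree-$2$ condition $\sum_{i,j} g_{2,d}((x_i,x_j)) = 0$, which unpacks to exactly the displayed moment identity. Your argument reaches the same conclusion by elementary linear algebra instead: the second-moment matrix $M$ with $\mathrm{tr}(M)=1$, the identity $\frac{1}{|X|^2}\sum_{i,j}(x_i,x_j)^2=\mathrm{tr}(M^2)$, Cauchy--Schwarz on the eigenvalues giving the lower bound $\frac{1}{d+1}$ with equality iff $M=\frac{1}{d+1}I$, and the observation that $M=\frac{1}{d+1}I$ is precisely the degree-$2$ design condition while antipodality disposes of degrees $1$ and $3$. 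All the steps check out, including the two points that genuinely need care: the sphere-moment computation $\frac{1}{|S^d|}\int_{S^d}x_kx_l\,d\sigma=\frac{\delta_{kl}}{d+1}$, and the fact that the inequality half uses nothing about $Y$ beyond membership in $S^d$, so it holds for arbitrary (non-antipodal) finite sets as the statement requires. What your approach buys is a self-contained proof avoiding any Gegenbauer machinery; what the harmonic-expansion approach buys is uniformity --- it generalizes immediately to characterize antipodal $(2k+1)$-designs by finitely many Gegenbauer conditions, whereas the moment-matrix trick is specific to degree $2$.
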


The following corollary give a 
construction of spherical $3$-designs. 
We denote by $\widetilde{G_X}$ the set $G_X\cup -G_X$
defined in Theorem \ref{thm:main}. 
\begin{cor}[\cite{M}]\label{cor:3-designs}
\begin{enumerate}
\item 
Let $X$ be a spherical $4$-design in $S^d$. 
Then, $\widetilde{G_X}$ is a spherical $3$-design 
in $S^{\dim{\rm Harm}_2({S^d})-1}$. 

\item 
Let $X$ be a spherical $4$-design in $S^d$ 
and an antipodal set. 
Let $X'$ be an arbitrary subset of $X$ with $|X'|=|X|/2$ 
such that no pair of antipodal vectors is present in $X'$. 
Then, $\widetilde{G_{X'}}$ is a spherical $3$-design 
in $S^{\dim{\rm Harm}_2({S^d})-1}$. 
\end{enumerate}
\end{cor}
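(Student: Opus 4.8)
The plan is to verify the design property through the quadratic characterization of Proposition \ref{prop:V}, since $\widetilde{G_X}$ is antipodal by construction ($\widetilde{G_X}=-\widetilde{G_X}$) and lies on the unit sphere (because $\langle G_x,G_x\rangle=g_{2,d}(1)/g_{2,d}(1)=1$). For part (1), writing $n=\dim{\rm Harm}_2(S^d)$ and treating $\widetilde{G_X}$ as the indexed family of $2N$ vectors, I must show
\[
\frac{1}{|\widetilde{G_X}|^2}\sum_{u,v\in\widetilde{G_X}}(u,v)^2=\frac{1}{n}.
\]
The Gegenbauer polynomial $g_{2,d}$ is even, so $G_{-x}=G_x$, and squaring the inner products removes all signs; summing over the four sign combinations gives $\sum_{u,v\in\widetilde{G_X}}(u,v)^2=4\sum_{x_i,x_j\in X}\bigl(g_{2,d}((x_i,x_j))/g_{2,d}(1)\bigr)^2$. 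With $|\widetilde{G_X}|^2=(2N)^2$ the claim reduces to
\[
\frac{1}{N^2}\sum_{x_i,x_j\in X}g_{2,d}((x_i,x_j))^2=\frac{g_{2,d}(1)^2}{n}.
\]

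The heart of the argument is to evaluate $\sum_{i,j}g_{2,d}((x_i,x_j))^2$ using the $4$-design hypothesis. Since $g_{2,d}(t)^2$ is a polynomial of degree $4$ in $t$, I would expand it in the Gegenbauer basis as $g_{2,d}(t)^2=\sum_{k=0}^{4}c_k\,g_{k,d}(t)$. The addition formula gives $g_{k,d}((x_i,x_j))=\sum_l Y_{k,l}(x_i)Y_{k,l}(x_j)$ for an orthonormal basis $\{Y_{k,l}\}$ of ${\rm Harm}_k(S^d)$, hence $\sum_{i,j}g_{k,d}((x_i,x_j))=\sum_l\bigl(\sum_i Y_{k,l}(x_i)\bigr)^2$. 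For $1\le k\le 4$ each inner sum vanishes, because $X$ is a spherical $4$-design and every $Y_{k,l}$ is orthogonal to the constants, so only the $k=0$ term survives: $\sum_{i,j}g_{2,d}((x_i,x_j))^2=c_0N^2$. Finally $c_0$ is the constant coefficient, which I would identify via the reproducing property $\int_{S^d}g_{2,d}((x,y))^2\,d\sigma(y)=g_{2,d}((x,x))=g_{2,d}(1)$; together with the normalization $g_{2,d}(1)=n$ this yields exactly the required identity.

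For part (2), the same observation $G_{-x}=G_x$ is decisive: since $X=X'\cup(-X')$ is antipodal, the embedding identifies $X'$ with $-X'$. Splitting $\sum_{x,y\in X}g_{2,d}((x,y))^2$ into the four blocks indexed by $\pm X'$ and using the evenness of $g_{2,d}$ shows that each block equals $\sum_{x,y\in X'}g_{2,d}((x,y))^2$, whence $\sum_{x,y\in X'}g_{2,d}((x,y))^2=\tfrac14\sum_{x,y\in X}g_{2,d}((x,y))^2$. Substituting the value computed in part (1) produces the design identity for $\widetilde{G_{X'}}$ directly from Proposition \ref{prop:V}.

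I expect the main obstacle to be the explicit evaluation of the constant Gegenbauer coefficient $c_0$ and, relatedly, fixing the normalization so that $g_{2,d}(1)=\dim{\rm Harm}_2(S^d)$; once the reproducing-kernel identity $\int_{S^d}g_{2,d}((x,y))^2\,d\sigma(y)=g_{2,d}(1)$ is in hand, the remainder is bookkeeping with the $4$-design condition and the evenness of $g_{2,d}$.
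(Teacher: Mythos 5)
Your argument is correct, but there is nothing in this paper to compare it against line by line: Corollary \ref{cor:3-designs} is imported from \cite{M} and is stated here without proof. Your route is, however, exactly the one the paper's own methodology dictates. The proof of Theorem \ref{thm:main} likewise reduces the $3$-design property of $G_X\cup -G_X$ to the quadratic criterion of Proposition \ref{prop:V} and evaluates $\sum_{i,j}\langle G_{x_i},G_{x_j}\rangle^2$; there the evaluation is trivial because all relevant inner products equal $\pm\ell$ by hypothesis, whereas you evaluate the same sum for a general $4$-design by expanding $g_{2,d}(t)^2$ in the Gegenbauer basis, applying the addition formula, and using the $4$-design condition to kill the $k=1,\dots,4$ terms, leaving $c_0N^2$ with $c_0=\int_{S^d}g_{2,d}((x,y))^2\,d\sigma(y)=g_{2,d}(1)$. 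That is the standard Venkov-style computation and it closes the argument. Two small points deserve explicit mention. First, normalization: the paper works with $g_{2,d}(x)=\frac{d+1}{d}x^2-\frac{1}{d}$, for which $g_{2,d}(1)=1$, while your addition formula $g_{k,d}((x,y))=\sum_l Y_{k,l}(x)Y_{k,l}(y)$ presupposes $g_{k,d}(1)=\dim{\rm Harm}_k(S^d)$; since the identity you must prove, $\frac{1}{N^2}\sum_{i,j}g_{2,d}((x_i,x_j))^2=g_{2,d}(1)^2/\dim{\rm Harm}_2(S^d)$, is homogeneous of degree two in $g_{2,d}$, the discrepancy is harmless, but you should say so (or fix one normalization throughout) rather than leave it as an anticipated ``obstacle.'' Second, in part (2) your four-block computation is fine but can be short-circuited: since $G_{-x}=G_x$, the map $x\mapsto G_x$ identifies $G_{X'}$ with $G_X$ (up to multiplicity two), so $\widetilde{G_{X'}}$ and $\widetilde{G_X}$ give the same normalized averages and part (2) follows from part (1) immediately.
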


\section{Proofs of Main results
}\label{sec:proof}

In this section, we give the proofs of Theorem \ref{thm:main}, 
Corollary \ref{cor:main1}, 
and Corollary \ref{cor:main2}. 
\begin{proof}[Proof of Theorem \ref{thm:main}]
Let $X=\{x_1,\ldots,x_{N}\}$ be a $(d+1,N,a)$ code. 
We have the following Gegenbauer polynomial of degree $2$ on $S^d$: 
\begin{align*}
g_{2,d}(x)=\frac{d+1}{d}x^2-\frac{1}{d}. 
\end{align*}

First, we show that 
$G_X\cup -G_X$ is a spherical $3$-design. 
By Proposition \ref{prop:V}, it is enough to show that 
\[
\frac{1}{|X|^2}\sum_{x_i,x_j\in X}\langle G_{x_i},G_{x_j}\rangle^2=\frac{2}{d(d+3)}
\]
because
\[
\dim\mbox{Harm}_2(S^d)=\frac{d+3}{d+1}\binom{d+1}{2}=\frac{d(d+3)}{2} 
\]
and $G_X\cup -G_X$ is an antipodal set. 
In fact, by the equation (\ref{eqn:1})
\begin{align*}
\frac{1}{|X|^2}\sum_{x_i,x_j\in X}\langle G_{x_i},G_{x_j}\rangle^2
&=\frac{1}{(2N)^2}\sum_{x_i,x_j\in X}g_{2,d}((x_i,x_j))^2\\
&=\frac{1}{(2N)^2}(4N+\ell^2((2N)^2-4N))\\
&=\frac{2}{d(d+3)}. 
\end{align*}
Therefore, 
$G_X \cup -G_X$ is a spherical $3$-design. 

Thereafter, we prove the optimality. 
For any antipodal set of points 
$Y = \{y_1,\ldots, y_{2N}\}$, 
by the equation (\ref{eqn:1}), the inequality
\[
\frac{1}{(2N)^2}\sum_{i,j=1}^{2N}(y_i,y_j)^2\geq \frac{1}{\dim\mbox{Harm}_2(S^d)}
\]
hold and we have 
\[
(y_i , y_j )^2 \geq \ell^2
\]
for some $y_i , y_j \in Y , i \neq j$ and 
$y_j\neq -y_i$.

The proof is completed. 




\end{proof}

Finally, we give the proofs of Corollary \ref{cor:main1} 
and Corollary \ref{cor:main2}. 
\begin{proof}[Proof of Corollary \ref{cor:main1}]
The first part follows from Corollary \ref{cor:3-designs}. 
We give the proof of optimality for the cases $(5,12,1/5)$. 
The other cases can be proved similarly. 

Let $X$ be a $(3,12,1/\sqrt{5})$ code. 
Let $X'$ be 
an arbitrary subset of $X$ with 
$|X'| = |X|/2$ such that no pair of 
antipodal vectors is present in $X'$. 
Therefore, $X'$ satisfies the condition of Theorem \ref{thm:main}. 

This completes the proof of Corollary \ref{cor:main1}. 
\end{proof}
\begin{proof}[Proof of Corollary \ref{cor:main2}]
Recall the condition of Theorem \ref{thm:main}:
\begin{align*}
\frac{(2N)^2}{d(d+3)/2}-4N=\ell^2((2N)^2-4N), 
\end{align*}
where $g_{2,d}(a)=\ell$ for all $a\in IP$. 
Using Mathematica, 
for $3\leq d+1\leq 100$ and 
for $1\leq m\leq 200$, 
we solve the equation:
\begin{align*}
\frac{(2N)^2}{d(d+3)/2}-4N=g_{2,d}(1/\sqrt{m})^2((2N)^2-4N) 
\end{align*}
We thereafter list the solutions $d+1,2N$, and $\ell$ in \cite{Mtable}. 
We eliminate the parameters, which do not satisfy the 
Fisher bounds \cite{DGS}. 

Moreover, we solve 
\[
g_{2,d}(x)=\ell
\]
and we list as "Inner Product" in \cite{Mtable}.

This completes the proof of Corollary \ref{cor:main2}. 
\end{proof}


\begin{rem}
Is there a $(d+1,N,a)$ code with the 
parameters in \cite{Mtable} except for the cases 
in Cororally \ref{cor:main2}? 
\end{rem}




\section*{Acknowledgments}

This work was supported by JSPS KAKENHI (18K03217).



\end{document}